\numberwithin{equation}{section}
\newtheorem{theorem}{Theorem}[section]
\newtheorem{lemma}[theorem]{Lemma}
\newtheorem{proposition}[theorem]{Proposition}
\newcommand{\N}{\mathbb{N}}
\begin{document}

\title{A lemma on the difference quotients}

\author{Risto Korhonen}
\address{Department of Physics and Mathematics, University of Eastern Finland, P.O. Box 111,
FI-80101 Joensuu, Finland}
\email{risto.korhonen@uef.fi}
\thanks{The first author is supported in part by the Academy of Finland grant (\#286877) and (\#268009), and the second author is supported by the JSPS KAKENHI Grant Number JP16K05194, the third author is supported by the China Scholarship Council (CSC)}

\author{Kazuya Tohge}
\address{College of Science and Engineering, Kanazawa University, Kakuma-machi, Kanazawa, 920-1192, Japan}
\email{tohge@se.kanazawa-u.ac.jp}
%

\author{Yueyang Zhang}
\address{Department of Physics and Mathematics, University of Eastern Finland, P.O. Box 111,
FI-80101 Joensuu, Finland}
\email{yueyang.zhang@uef.fi}

\author{Jianhua Zheng}
\address{Department of mathematical sciences, Tsinghua University, Beijing 100084, China}
\email{jzheng@math.tsinghua.edu.cn}

\subjclass[2010]{Primary 30D35; Secondary 30D30}

\date{\today}

\commby{}

\begin{abstract}
Using a new Borel type growth lemma, we extend the difference analogue of the lemma on the logarithmic derivative due to Halburd and Korhonen to the case of meromorphic functions $f(z)$ such that $\log T(r,f)\leq r/(\log r)^{2+\nu}$, $\nu>0$, for all sufficiently large $r$. The method by Halburd and Korhonen
implies an estimate for the lemma on difference quotients, where the exceptional set is of finite logarithmic measure. We show the necessity of this set by proving that it must be of infinite linear measure for meromorphic functions whose deficiency is dependent on the choice of the origin. In addition, we show that there is an infinite sequence of $r$ in the set for which $m(r,f(z+c)/f(z))$ is not small compared to $T(r,f)$ for entire functions constructed by Miles. We also give a discrete version of Borel type growth lemma and use it to extend Halburd's result on first order discrete equations of Malmuist type.
\end{abstract}

\maketitle



\section{Introduction}

The lemma on the logarithmic derivatives is one of the key results needed in proving Nevanlinna's second main theorem \cite{nevanlinna:25}, as well as an important tool in analyzing value distribution of entire and meromorphic solutions of differential equations \cite{laine:93}. Nevanlinna theoretic approach by Ablowitz, Halburd and Herbst \cite{ablowitzhh:00} to study difference Painlev\'e equations leads to a need of finding extensions of value distribution theory for difference operators. A lemma on difference quotients for finite order meromorphic functions was introduced in two independent studies, by Halburd and the first author \cite{halburdk:06JMAA,halburdk:06AASFM}, and by Chiang and Feng \cite{chiangf:08}. The lemma on difference quotients was later on extended to include meromorphic functions $f$ of hyper-order less than one by Halburd and the first and the second author \cite{halburdkt:14TAMS} as
    \begin{equation}\label{hkt_est}
    m\left(r,\frac{f(z+c)}{f(z)}\right) = o\left(\frac{T(r,f)}{r^{1-\varsigma-\varepsilon}}\right),
    \end{equation}
where $\varepsilon >0$, $c\in\mathbb{C}\setminus\{0\}$ and $r\to\infty$ outside of a set of finite logarithmic measure, and the \emph{hyper-order}~$\varsigma$ of $f$ is defined as
    \begin{equation*}
    \varsigma=\varsigma(f)=\limsup_{r\to\infty} \frac{\log\log T(r,f)}{\log r}.
    \end{equation*}
The estimate by Chiang and Feng \cite{chiangf:08} can be written in the form
    \begin{equation}\label{edmund_est}
    m\left(r,\frac{f(z+c)}{f(z)}\right) = O(r^{\sigma-1+\varepsilon}),
    \end{equation}
where $r\to\infty$ without an exceptional set and the \emph{order} $\sigma$ of $f$ is defined as
\begin{equation*}
\sigma=\sigma(f)=\limsup_{r\to\infty}\frac{\log T(r,f)}{\log{r}}.
\end{equation*}
Chiang and Feng \cite{chiangf:09,chiangf:16} and, independently, Bergweiler and Langley \cite{bergweilerl:07} have obtained Wiman-Valiron type estimates for difference quotients in the case of order $<1$ meromorphic functions. These results extend a Wiman-Valiron method for differences due to Ishizaki and Yanagihara \cite{ishizakiy:04}. Recently, Chiang and Feng \cite{chiangfengAW} obtained the Askey-Wilson logarithmic difference estimate for meromorphic function of finite logarithmic order, and Cheng and Chiang \cite{chengchiang17} proved a lemma on the logarithmic Wilson differences for finite order meromorphic functions.

The necessity of the appearance of the exceptional set in the lemma on difference quotients has so far remained an open question. By a superficial observation, in estimates of the type \eqref{edmund_est} the exceptional set does not seem to be present. However, for functions of irregular growth whose \emph{lower order} $\lambda$ is defined by
\begin{equation*}
\lambda=\lambda(f)=\liminf_{r\to\infty}\frac{\log T(r,f)}{\log{r}},
\end{equation*}
and satisfies $\lambda \leq \sigma-1$, the error term on the right hand side of \eqref{edmund_est} is, in fact, bigger than the characteristic $T(r,f)$ for large part of the positive real line.
In this paper we will show, for meromorphic functions whose deficiency are dependent on the choice of the origin, such as functions constructed by Gol'dberg \cite{BelinskiuiGol:54} and Miles \cite{miles:83}, that the exceptional set in the estimate \eqref{hkt_est} cannot be reduced into a set of finite linear measure.
In particular, we will show that for Miles' example of entire functions $f$ with order $3/2<\sigma<\infty$, there is an infinite sequence of $r$ in the exceptional set such that
    \begin{equation*}
    m\left( r,\frac{f(z+c)}{f(z)}\right)\not= o(T(r,f)),
    \end{equation*}
as $r\to\infty$. Before this, in Section~\ref{diffquo_sec} we will first extend the lemma on difference quotients to a slightly more general case where $f$ is a meromorphic function such that $\log T(r,f)\leq r/(\log r)^{2+\nu}$ for any $\nu> 0$ and for all $r$ sufficiently large by showing that for such functions
    \begin{equation}\label{difana_intro}
    m\left( r,\frac{f(z+c)}{f(z)}\right)= o\left(\frac{T(r,f)}{(\log r)^{\nu-\varepsilon}}\right),
    \end{equation}
for all $r$ outside of a set of finite logarithmic measure. A key tool in the proof of this extension is a new Borel type growth lemma, introduced in Section~\ref{borel_sec} below. A purely discrete form of the Borel lemma is given in Section~\ref{discrete_sec}, which extends a result due to Al-Ghassani and Halburd \cite{al-ghassanih:15}. Finally, we obtain a Malmquist type theorem for first order discrete equations in Section~\ref{dMalm_sec} as an application of the discrete Borel lemma.






\section{Borel type growth lemma extensions}\label{borel_sec}

The following lemma is an extension of the growth lemma \cite[Lemma~8.3]{halburdkt:14TAMS}.

\begin{lemma}\label{technical}
Let $T:[0,\infty)\to(0,\infty)$ be a non-decreasing continuous
function and let $s\in(0,\infty)$. If
    \begin{equation}\label{assu}
    \limsup_{r\to\infty}\frac{h(r)h(rh(r)) \log T(r)}{r}=\zeta,
    \end{equation}
where $\zeta\in[0,\infty)$ and $h:[r_0,\infty)\to(0,\infty)$ is an increasing function such that
    \begin{equation*}
    \int_{r_0}^\infty \frac{1}{th(t)}\,dt
    \end{equation*}
converges, then
   \begin{equation}\label{concl}
    T(r+s) = T(r)+(\zeta+o(1))\left(\frac{T(r)}{h(r)}\right),
    \end{equation}
where $r$ runs to infinity outside of a set $E$ of finite logarithmic measure, i.e. $\int_{E}dr/r<\infty$.
\end{lemma}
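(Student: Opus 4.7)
My plan is to argue by contradiction. Assume that for some $\varepsilon>0$, the set
\begin{equation*}
E=\left\{r\geq r_0\,:\,T(r+s)-T(r)>(\zeta+\varepsilon)\,T(r)/h(r)\right\}
\end{equation*}
has infinite logarithmic measure. Convergence of $\int_{r_0}^{\infty}dt/(th(t))$ together with the monotonicity of $h$ forces $h(r)\to\infty$, so for $r\in E$ large enough the defining inequality passes to the logarithmic form $\log T(r+s)-\log T(r)\geq (\zeta+\varepsilon/2)/h(r)$. On the complementary side, the $\limsup$ assumption \eqref{assu} supplies, for any prescribed $\eta>0$, the upper bound $\log T(R)\leq (\zeta+\eta)R/(h(R)h(Rh(R)))$ for all sufficiently large $R$. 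I will combine these two inequalities by telescoping $\log T$ along an arithmetic progression of step $s$.

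Fix a large $\rho$, a phase $r_1\in[\rho,\rho+s)$, and $N\in\mathbb{N}$, and set $A(r_1)=\{0\leq n<N\,:\,r_1+ns\in E\}$. Telescoping $\log T$ across the $N$ steps $r_1,r_1+s,\ldots,r_1+Ns$, retaining only the contributions from indices in $A(r_1)$ (the remaining summands are nonnegative by monotonicity of $T$), and replacing each $h(r_1+ns)$ by the larger $h(r_1+Ns)$, yields
\begin{equation*}
\frac{(\zeta+\varepsilon/2)|A(r_1)|}{h(r_1+Ns)}\leq \log T(r_1+Ns)-\log T(r_1)\leq \frac{(\zeta+\eta)(r_1+Ns)}{h(r_1+Ns)\,h((r_1+Ns)h(r_1+Ns))}.
\end{equation*}
Choosing $\eta<\varepsilon/2$ then gives $|A(r_1)|\leq (r_1+Ns)/h((r_1+Ns)h(r_1+Ns))$.

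The decisive step is to convert this grid bound into a bound on the Lebesgue measure of $E$ by averaging over the phase $r_1$. Fubini yields the exact identity
\begin{equation*}
\int_{\rho}^{\rho+s}|A(r_1)|\,dr_1=\sum_{n=0}^{N-1}|E\cap[\rho+ns,\rho+(n+1)s]|=|E\cap[\rho,\rho+Ns]|,
\end{equation*}
while the monotonicity of $h$ gives $|A(r_1)|\leq (\rho+Ns+s)/h(\rho h(\rho))$ for every $r_1\in[\rho,\rho+s)$. Taking $\rho=2^k$ and $N=\lceil 2^k/s\rceil$ so that $[\rho,\rho+Ns]\supseteq[2^k,2^{k+1}]$, and dividing by $2^k$, produces the dyadic estimate $\int_{E\cap[2^k,2^{k+1}]}dr/r\leq C/h(2^k h(2^k))$.

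To close the contradiction I use that $h$ increasing implies $h(2^k h(2^k))\geq h(2^k)$, and a direct dyadic comparison with the convergent integral $\int dt/(th(t))$ yields $\sum_k 1/h(2^k)<\infty$. Summing the dyadic block bounds then gives $\int_E dr/r<\infty$, contradicting the assumed infinite logarithmic measure of $E$. The main obstacle in this plan is the Fubini-averaging step together with the $h$-weight bookkeeping inside the telescoping sum: the grid lower bound $|A(r_1)|/h(r_1+Ns)$ only reflects $T$ at the sampled points, so one must verify that the iteration range $N\sim 2^k/s$ is just long enough for the quantity $r/(h(r)h(rh(r)))$ supplied by \eqref{assu} to dominate the discrete cardinality, producing precisely the summable quantity $1/h(2^k h(2^k))$ — this is where the specific form $h(r)h(rh(r))$ in the hypothesis is used.
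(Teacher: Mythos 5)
Your proposal is correct, and it takes a genuinely different route from the paper's own proof, so let me compare the two.

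The paper's proof extracts a discrete, $s$-separated sequence $\{r_n\}$ covering the putative bad set $F_\eta$, shows by a direct comparison with $\int dt/(th(t))$ that if $r_n\geq nh(n)$ eventually then $F_\eta$ already has finite logarithmic measure, and in the remaining case extracts a subsequence $r_{n_j}<n_jh(n_j)$ along which the multiplicative inequality $T(r_{n+1})\geq(1+\eta/h(r_n))T(r_n)$ can be iterated. Pushing that product through the logarithm and using the slow-growth of $h$ yields $\limsup h(r)h(rh(r))\log T(r)/r\geq\eta>\zeta$, contradicting \eqref{assu}. Your argument replaces the sequence-extraction and case split entirely by a phase-averaging device: you telescope $\log T$ along the fixed grid $r_1+ns$, collect the $(\zeta+\varepsilon/2)/h$-sized increments contributed by the indices landing in $E$, compare against the endpoint bound $\log T(R)\leq(\zeta+\eta)R/(h(R)h(Rh(R)))$ from \eqref{assu}, and then use the exact Fubini identity $\int_\rho^{\rho+s}|A(r_1)|\,dr_1=|E\cap[\rho,\rho+Ns]|$ to convert the per-phase cardinality bound into a Lebesgue-measure bound on a dyadic block. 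Summing the blocks by a dyadic comparison with $\int dt/(th(t))$ then gives $\int_E dr/r<\infty$ directly. Both arguments hinge on the same two ingredients — converting membership in the bad set into a definite $\log T$-increment of size $\gtrsim 1/h$, and using \eqref{assu} to cap $\log T$ at the far endpoint, with the iterated $h(rh(r))$ factor doing the work of making the residual quantity $r/h(rh(r))$ small relative to the $h$-weighted logarithmic measure. The paper's structure makes visible exactly which subsequence violates \eqref{assu}; your Fubini version is more quantitative and direct (indeed it needs no contradiction framing at all), and it makes transparent why the particular composition $h(r)h(rh(r))$ is the right weight. One minor point worth recording in a final write-up: after dropping $-\log T(r_1)$ in the telescoping you should note that $\log T(r_1)\geq\log T(0)$ is a harmless $O(1)$, and in passing to $h(2^kh(2^k))\geq h(2^k)$ you are using that $h\to\infty$ (hence $h\geq1$ eventually), which you did already deduce from the convergence of $\int dt/(th(t))$.
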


\begin{proof}
For a fixed constant $\eta\in\mathbb{R}^+$ such that $\eta>\zeta$, assume that the set $F_{\eta}\subset[1,\infty)$ defined by
    \begin{equation}\label{Fdef}
    F_{\eta}=\left\{r\in\mathbb{R}^+:
    \frac{T(r+s)-T(r)}{T(r)}\cdot h(r)\geq \eta
    \right\}
    \end{equation}
is of infinite logarithmic measure. 
Note that $F_\eta$ is a closed
set and therefore it has a smallest element, say $r_0$. Set
$r_n=\min(F_\eta\cap [r_{n-1}+s,\infty))$ for all $n\in\mathbb{N}$. Then
the sequence $\{r_n\}_{n\in\mathbb{Z}^+}$ satisfies $r_{n+1}-r_n\geq s$
for all $n\in\mathbb{Z}^+$, $F_\eta\subset \bigcup_{n=0}^\infty
[r_n,r_n+s]$ and
    \begin{equation}\label{assuinpr2}
    \left(1+\frac{\eta}{h(r_n)}\right)T(r_n)\leq T(r_{n+1})
    \end{equation}
for all $n\in\mathbb{Z}^+$.

Suppose that there exists an $m\in\mathbb{Z}^+$
such that $r_n\geq n h(n)$ for all $r_n\geq m$. But
then,
    \begin{eqnarray*}
    \int_{F_\eta\cap[1,\infty)}\frac{dt}{t} &\leq& \sum_{n=0}^\infty
    \int_{r_n}^{r_n+s}\frac{dt}{t}
    \leq \int_1^{m} \frac{dt}{t} +  \sum_{n=1}^\infty
    \log\left(1+\frac{s}{r_n}\right)\\
    &\leq& \sum_{n=1}^\infty
    \log\left(1+\frac{s}{nh(n)} \right) +O(1)  <\infty,
    \end{eqnarray*}
which contradicts the assumption $\int_{F_\eta\cap[1,\infty)}dt/t=\infty$.
Therefore, the sequence $\{r_n\}_{n\in\mathbb{Z}^+}$ has a subsequence
$\{r_{n_j}\}_{j\in\mathbb{Z}^+}$ such that $r_{n_j}<
n_j h(n_j)$ for all $j\in\mathbb{Z}^+$.
Since $r_{n+1}-r_n\geq s$, we may assume without loss of generality that $s\geq1$, by taking another subsequence of $\{r_{n_j}\}_{j\in\mathbb{Z}^+}$ if necessary. By iterating~\eqref{assuinpr2} along the sequence
$\{r_{n_j}\}_{j\in\mathbb{Z}^+}$, we have
    \begin{equation*}
    T(r_{n_j})\geq \prod_{\nu=0}^{n_j-1}\left(1+\frac{\eta}{h(r_\nu)}\right)T(r_0)
    \end{equation*}
for all $j\in \mathbb{Z}^+$. It follows that
    \begin{equation*}
    r_{n_j} \geq r_0 + n_js \geq n_js \geq n_j
    \end{equation*}
for all $j\in\mathbb{Z}^+$, and so
    \begin{equation*}
    \begin{split}
    & \limsup_{r\to\infty}\frac{h(r)h(rh(r))\log T(r)}{r} \\
    &\quad \geq
    \limsup_{j\to\infty}\frac{\displaystyle h(r_{n_j})h(r_{n_j} h(r_{n_j}))\left(\log T(r_0)+\sum_{\nu=0}^{n_j-1}\log\left(1+\frac{\eta}{h(r_\nu)}\right)\right)}{r_{n_j}}\\
    &\quad \geq\limsup_{j\to\infty}\frac{\displaystyle h(n_j)h(n_jh(n_j))\left(\log
    T(r_0)+n_j\log\left(1+\frac{\eta}{h(r_{n_j})}\right)\right)}
    {n_j h(n_j)}
    \\
    & \quad \geq  \limsup_{j\to\infty}\frac{\displaystyle h(n_j)h(n_jh(n_j)) \Bigg(\log T(r_0)+n_j
    \frac{\eta}{h(n_jh(n_j))}
    \log\Bigg(1+\frac{\eta}{h(n_jh(n_j))}\Bigg)^{\frac{h(n_jh(n_j))}{\eta}}\Bigg)}
    {n_jh(n_j)}
    \\
    & \quad = \limsup_{j\to\infty} \frac{\eta\cdot n_j\cdot 1 \cdot h(n_j)h(n_jh(n_j)) }{n_jh(n_j)h(n_jh(n_j))} = \eta >\zeta,
    \end{split}
    \end{equation*}
which contradicts \eqref{assu}. Hence the logarithmic measure of
$F_\eta$ defined by \eqref{Fdef} must be finite, and so
    \begin{equation*}
    T(r+s) = T(r)+ (\zeta+o(1))\left(\frac{T(r)}{h(r)}\right)
    \end{equation*}
for all $r$ outside of a set of finite logarithmic measure.
Therefore the assertion \eqref{concl} follows.
\end{proof}

Let $\zeta(r)$ be a function of $r$ which has a finite limit as $r\rightarrow\infty$ and $f(z)$ be a meromorphic function. Under the assumptions of Lemma~\ref{technical}, if $T(r,f)$, $r\geq r_0>e$, grows in such a way that
\begin{equation}\label{assuinpr3}
\log T(r,f) \leq \frac{r\zeta(r)}{h(r)h(rh(r))},
\end{equation}
then $T(r,f)$ satisfies the asymptotic relationship \eqref{concl}. However, when $\zeta(r)$ is a function of $r$ which tends to $\infty$ as $r\rightarrow\infty$, the situation will be different. For example, for the function $f(z)=\exp(e^{z})$, it follows from \cite[p.~7]{hayman:64} that
\begin{equation*}
\log T(r,f) \sim r-\frac{1}{2}\log r- \frac{1}{2}\log(2\pi^3).
\end{equation*}
We see that in this case $\zeta=\infty$ in \eqref{assu}, but $T(r+1,f)=(e+o(1))T(r,f)$. This example shows that Lemma~\ref{technical} is best possible when applied to meromorphic functions in the sense that $\zeta$ cannot be extended to $\infty$ in general.

\section{Lemma on the difference quotients}\label{diffquo_sec}

Let $c$ be a nonzero constant and $f(z)$ be a nonconstant meromorphic function. It is shown in \cite[p.~66]{goldbergo:70} (see also \cite{ablowitzhh:00} or \cite{yanagihara:80}) that the following double inequality
\begin{equation}\label{ttt rr0}
(1+o(1))T(r-|c|,f(z)) \leq T(r,f(z+c))\leq (1+o(1))T(r+|c|,f(z)),
\end{equation}
holds as $r\rightarrow \infty$ without any exceptional set. If $T(r,f)$ also satisfies \eqref{assuinpr3}, then for a given small $\varepsilon>0$ and any number $v>2\varepsilon$, we may choose $h(r)$ in \eqref{assuinpr3} to be $h(r)=(\log r)^{1+\varepsilon}$ and let $\zeta(r)=h(r)h(rh(r))/(\log r)^{2+v}$, where $r\geq r_0>e$.
Now, the inequality \eqref{assuinpr3} becomes $\log T(r,f) \leq r/(\log r)^{\nu+2}$ for all $r\in [r_0,\infty)$
and, by Lemma~\ref{technical}, it follows that for any finite number $s>0$,
    \begin{equation}\label{ttt r}
    T(r+s,f) = T(r,f)+ o\left(\frac{T(r,f)}{(\log r)^{1+\varepsilon}}\right),
    \end{equation}
for all $r$ outside of a set of finite logarithmic measure. By combining \eqref{ttt rr0} and \eqref{ttt r}, we get the asymptotic relation
\begin{equation}\label{ttt rr1}
T(r,f(z+c))= (1+o(1))T(r,f(z)),
\end{equation}
where $r\rightarrow \infty$ outside an exceptional set of finite logarithmic measure.
Similarly to \eqref{ttt rr0}, it is also suggested in \cite[p.~66]{goldbergo:70} that for any nonconstant meromorphic function $f(z)$ the double inequality
\begin{equation}\label{ttt rr00}
(1+o(1))N(r-|c|,f(z)) \leq N(r,f(z+c))\leq (1+o(1))N(r+|c|,f(z)),
\end{equation}
holds as $r\rightarrow \infty$. Since the integrated counting function $N(r,f)$ is nondecreasing, continuous and satisfies $N(r,f) \leq T(r,f)$, when
$f$ satisfies $\log T(r,f)\leq r/(\log r)^{2+\nu}$, $\nu> 0$, we have by Lemma~\ref{technical} and \eqref{ttt rr00} that
    \begin{equation}\label{ttt rr11}
    N(r,f(z+c)) = (1+o(1))N(r,f(z)),
   \end{equation}
holds as $r\rightarrow \infty$ outside an exceptional set of finite logarithmic measure. Thus the asymptotic  relation \eqref{ttt rr1} can also be obtained by combining \eqref{ttt rr11} and the following Lemma~\ref{danlogue}, which is an extension of the estimate on difference quotients \eqref{hkt_est}.

\begin{lemma}\label{danlogue}
Let $f$ be a nonconstant meromorphic function such that $\log T(r,f)\leq r/(\log r)^{2+\nu}$, $\nu>0$, for all sufficiently large $r$. Then, for a given $0<\varepsilon < \nu$,
    \begin{equation}\label{difana1}
    m\left( r,\frac{f(z+c)}{f(z)}\right)= o\left(\frac{T(r,f)}{(\log r)^{\nu-\varepsilon}}\right),
    \end{equation}
for all $r$ outside of an exceptional set with finite logarithmic measure.
\end{lemma}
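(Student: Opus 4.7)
The strategy is to combine the new growth lemma (Lemma~\ref{technical}) with the core Poisson--Jensen-type estimate used in the proof of the difference quotient lemma in \cite[Theorem~5.1]{halburdkt:14TAMS}, which originates in \cite{halburdk:06JMAA, halburdk:06AASFM}.

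The first step is to apply Lemma~\ref{technical} to $T(r)=T(r,f)$ with a test function of the form $h(r)=(\log r)^{a}$. Integrability of $\int_{r_0}^\infty dt/(th(t))$ forces $a>1$, while the limsup hypothesis \eqref{assu} becomes
$$\frac{h(r)\,h(rh(r))\,\log T(r,f)}{r} \;\lesssim\; (\log r)^{2a-(2+\nu)},$$
which tends to $0$ (so $\zeta=0$) precisely when $a<1+\nu/2$. For any admissible such $a$, Lemma~\ref{technical} yields
$$T(r+s,f)-T(r,f)\;=\;o\!\left(\frac{T(r,f)}{(\log r)^{a}}\right)$$
for any fixed $s>0$, as $r\to\infty$ outside a set of finite logarithmic measure.

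The second step is to invoke the integral / Poisson--Jensen inequality from the proof of \cite[Theorem~5.1]{halburdkt:14TAMS}, which for a suitable parameter $s$ yields a bound of the form
$$m\!\left(r,\frac{f(z+c)}{f(z)}\right)\;\leq\;K\bigl(T(r+|c|+s,f)-T(r,f)\bigr)\;+\;\text{lower-order terms},$$
in which the lower-order terms (bounded by constants times $\log T(r+|c|+s,f)$ and $\log r$) are absorbed into $o(T(r,f)/(\log r)^{\nu-\varepsilon})$ under the growth hypothesis $\log T(r,f)\leq r/(\log r)^{2+\nu}$. Substituting the bound from the first step then gives the claim, and the exceptional set remains of finite logarithmic measure by Lemma~\ref{technical}.

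The main technical obstacle is the joint choice of $h$ (hence $a$), of the parameter $s$, and of the auxiliary parameters in the key inequality so that the factor $(\log r)^{\nu-\varepsilon}$ claimed in the conclusion really emerges; one must exploit the specific structure of the Poisson--Jensen bound and the strength of the hypothesis $\log T(r,f)\leq r/(\log r)^{2+\nu}$ in addition to the direct output of Lemma~\ref{technical}. Matching these estimates carefully, and verifying that all error terms are absorbed into $o(T(r,f)/(\log r)^{\nu-\varepsilon})$, is where the bookkeeping has to be done with care.
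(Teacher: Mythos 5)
Your proposal has a genuine gap, and it is not merely a bookkeeping issue.

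The core difficulty is Step~2. The Poisson--Jensen estimate in \cite[Lemma~8.2]{halburdkt:14TAMS} (the actual tool in the proof of \cite[Theorem~5.1]{halburdkt:14TAMS}) does \emph{not} have the form
$m\bigl(r,f(z+c)/f(z)\bigr)\leq K\bigl(T(r+|c|+s,f)-T(r,f)\bigr)+\text{lower order}$; rather it bounds $m\bigl(r,f(z+c)/f(z)\bigr)$ by a factor of the shape
$\dfrac{\xi\bigl(T(r+|c|,f)\bigr)}{\delta(1-\delta)\,r^{\delta}}$
multiplying the \emph{full} characteristic $T(\alpha(r+|c|),f)$, where $\alpha=1+1/\xi\bigl(T(r+|c|,f)\bigr)$ and $\xi$ is the Borel-lemma test function. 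No difference of characteristics appears. Consequently you cannot feed the conclusion of Lemma~\ref{technical} directly into the right-hand side.

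Even if such a difference estimate were available, your own analysis shows that Lemma~\ref{technical} applied with $h(r)=(\log r)^a$ is constrained to $1<a<1+\nu/2$ in order to have $\zeta<\infty$ in \eqref{assu}. This caps the output at $T(r+s,f)-T(r,f)=o\bigl(T(r,f)/(\log r)^{1+\nu/2-\varepsilon'}\bigr)$, which is strictly weaker than the target $o\bigl(T(r,f)/(\log r)^{\nu-\varepsilon}\bigr)$ as soon as $\nu>2$ (roughly). So the ``joint choice of $a$, $s$ and auxiliary parameters'' that you flag as bookkeeping is in fact an obstruction: Lemma~\ref{technical} alone cannot supply the exponent $\nu-\varepsilon$.

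The paper's proof obtains the exponent $\nu-\varepsilon$ from a different source. It first applies the \emph{classical} Borel lemma (Cherry--Ye), with $\xi(x)=x^{\varepsilon}$ in the finite-order case and $\xi(x)=(\log x)(\log\log x)^{1+\varepsilon}$ in the infinite-order case, so that $T(\alpha(r+|c|),f)\leq CT(r+|c|,f)$ outside a set of finite logarithmic measure. It then invokes \cite[Lemma~8.2]{halburdkt:14TAMS}, producing a bound of the form
$O\!\left(\dfrac{[\log T(r+|c|,f)][\log\log T(r+|c|,f)]^{1+\varepsilon}}{\delta(1-\delta)r^{\delta}}\,T(r+|c|,f)\right)$.
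The growth hypothesis $\log T(r,f)\leq r/(\log r)^{2+\nu}$ is then plugged \emph{directly} into the $[\log T]$ factor, and the choice $\delta=1-1/\log r$ makes $1/(\delta(1-\delta)r^{\delta})\asymp (\log r)/r$; multiplying gives exactly the factor $(\log r)^{-(\nu-\varepsilon)}$. Lemma~\ref{technical} enters only at the very end, through \eqref{ttt r}, to replace $T(r+|c|,f)$ by $T(r,f)$ on the right-hand side, which requires only a modest $h(r)=(\log r)^{1+\varepsilon}$. So both the key inequality you rely on and the role you assign to Lemma~\ref{technical} are off, and the argument as written does not recover the stated exponent.
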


\begin{proof}
Let $C>1$ and $r_0$ be such that $T(r,f) \geq x_0 >e$ for all $r\geq r_0$. By the Borel's lemma (see, e.g. \cite{cherryy:01}), we know that there is a positive, non-decreasing and continuous function $\xi(x)$, $x_0 \leq x<\infty$ such that the following inequality
\begin{equation*}
T\left(r+\frac{r}{\xi(T(r,f))},f\right)\leq CT(r,f)
\end{equation*}
holds for all $r$ outside a set $E$ satisfying
    \begin{equation}\label{size  E}
    \int_{E\cap[r_0,R]}\frac{dr}{r} \leq \frac{1}{\log C} \int_{x_0}^{T(R,f)}\frac{dx}{x \xi(x)}+O(1),
    \end{equation}
where $R<\infty$. For a given $0<\varepsilon < \nu$, let $\xi(x)=x^{\varepsilon}$ when $f$ has finite order and $\xi(x)=(\log x)(\log\log x)^{1+\varepsilon}$ when $f$ has infinite order. It follows from \eqref{size  E} that the following closed set
\begin{equation}\label{def  E}
E=\left\{r:T\left(r+|c|+\frac{r+|c|}{\xi(T(r+|c|,f))},f\right)\geq CT(r+|c|,f)\right\}
\end{equation}
has finite logarithmic measure at most. Let
\begin{equation*}
\alpha = 1+\frac{1}{\xi(T(r+|c|,f))}.
\end{equation*}
Then,
    \begin{equation}\label{exceptional set}
    T(\alpha(r+|c|),f) = T\left(r+|c|+\frac{r+|c|}{\xi(T(r+|c|,f))},f\right) \leq  CT(r+|c|,f)
    \end{equation}
holds for all $r$ outside of the set $E$. By assumption $\log T(r,f)\leq r/(\log r)^{2+\nu}$, $\nu> 0$, for all $r\geq r_0$ and so we have from \cite[Lemma~8.2]{halburdkt:14TAMS} that
    \begin{equation*}\label{difana0}
    \begin{split}
m\left( r,\frac{f(z+c)}{f(z)}\right)&= O\left(\frac{[\log T(r+|c|,f)][\log\log T(r+|c|,f)]^{1+\varepsilon}}{\delta(1-\delta)r^{\delta}}T(r+|c|,f)\right)\\
    &=O\left(\frac{(r+|c|)[\log(r+|c|)-(2+\nu)\log\log(r+|c|)]^{1+\varepsilon}}{[\log(r+|c|)]^{2+\nu}\delta(1-\delta)r^{\delta}}T(r+|c|,f)\right),
    \end{split}
    \end{equation*}
which together with \eqref{ttt r} yields the estimate \eqref{difana1} by choosing $\delta=1-1/\log r$.
\end{proof}

For meromoprhic functions of hyper-order $\varsigma\geq1$, the quantity $m(r,f(z+c)/f(z))$ can grow as fast as $T(r,f)$ for all $r\in [0,\infty)$, for example, for $f(z)=\exp(e^{z})$. But for meromorphic functions of hyper-order $\varsigma<1$, this case occurs only when the sequence of $r$ is in a certain set of finite logarithmic measure, as indicated by the estimate \eqref{hkt_est} or by \eqref{difana1}.
Note that when $\varsigma<1$ we need to choose $\xi(x)=(\log x)^{1+\varepsilon/3}$ in Lemma~\ref{danlogue} to get the estimate \eqref{hkt_est} when $f$ has infinite order as in the proof of \cite[Lemma~9.1]{halburdkt:14TAMS} and, consequently, the set $E$ in \eqref{def  E} is redefined to be
\begin{equation}\label{def  E00}
E=\left\{r:T\left(r+|c|+\frac{r+|c|}{(\log T(r+|c|,f))^{1+\varepsilon/3}},f\right)\geq CT(r+|c|,f)\right\},
\end{equation}
where $(\log T(r+|c|,f))^{1+\varepsilon/3} \leq (r+|c|)^{(\varsigma+\varepsilon/3)(1+\varepsilon/3)}$ and $(\varsigma+\varepsilon/3)(1+\varepsilon/3)<1$ for a sufficiently small $\varepsilon>0$ and large $r$.
Below we consider the necessity of the exceptional set which appears in the estimate \eqref{hkt_est} and the irregular behavior of the proximity function $m(r,f(z+c)/f(z))$ in the exceptional set. To this end, we first prove the following Proposition~\ref{necessity e}, which is a counterpart of a result due to Valiron \cite{valiron:47} (see also \cite[p.~271]{nevanlinna:70}) concerning the dependence on the choice of the origin of the deficiency of a general meromorphic function $f$. Recall that the \emph{deficiency} $d(0,f)$ for the value~$0$ is defined as
     \begin{equation*}
    d(0,f):=1-\limsup_{r\rightarrow\infty}\frac{N(r,1/f)}{T(r,f)}=\liminf_{r\rightarrow\infty}\frac{m(r,1/f)}{T(r,f)}.
    \end{equation*}
Valiron \cite{valiron:47} proved: If the characteristic function $T(r,f)$ of a meromorphic function $f$ satisfies the condition
    \begin{equation}\label{ttt r0}
    \lim_{r\rightarrow\infty}\frac{T(r+1,f)}{T(r,f)}=1,
    \end{equation}
then the deficiency of $f$ is independent on the choice of origin, that is, $d(0,f(z))=d(0,f(z+c))$ for any finite non-zero constant $c$.
The function $f(z)=\exp(e^z)$ which has~0 as the Picard exceptional value shows that the converse of Valiron's result is in general not true. We prove

\begin{proposition}\label{necessity e}
Let $f$ be a non-constant meromorphic function. If $d(0,f(z))>d(0,f(z+c))$ for some $c\not=0$, then there exists a constant $1<C<\infty$ and a set $E_0$ with infinite linear measure such that
    \begin{equation*}
     T(r+|c|,f)\geq CT(r,f),
    \end{equation*}
holds for all $r\in E_0$. Moreover, if $f$ is entire, then there is an infinite sequence of $r$ such that as $r\rightarrow\infty$,
    \begin{equation*}
     m\left(r,\frac{f(z+c)}{f(z)}\right)\not=o(T(r,f)).
    \end{equation*}
\end{proposition}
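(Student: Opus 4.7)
The plan is to convert the deficiency gap into a growth inequality for $T(r,f)$ via the First Main Theorem together with the double estimates \eqref{ttt rr0} and \eqref{ttt rr00}. Writing $D_1:=1-d(0,f)=\limsup_{r\to\infty} N(r,1/f)/T(r,f)$ and $D_2:=1-d(0,f(z+c))$, the hypothesis becomes $D_2>D_1$. Fix $\varepsilon>0$ small so that $C_0:=(D_2-\varepsilon)/(D_1+\varepsilon)>1$. Then for all large $r$, $N(r,1/f)/T(r,f)\le D_1+\varepsilon$, while by definition of $\limsup$ there is a sequence $r_k\to\infty$ with $N(r_k,1/f(z+c))/T(r_k,f(z+c))\ge D_2-\varepsilon$. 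Inserting \eqref{ttt rr0} and \eqref{ttt rr00} into the latter gives
\[
(D_2-\varepsilon)(1-o(1))\,T(r_k-|c|,f)\le (1+o(1))(D_1+\varepsilon)\,T(r_k+|c|,f),
\]
so $T(r_k+|c|,f)/T(r_k-|c|,f)\ge(1-o(1))C_0$ along $\{r_k\}$.

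Splitting the log-increment as $\log[T(r_k+|c|,f)/T(r_k,f)]+\log[T(r_k,f)/T(r_k-|c|,f)]\ge\log C_0-o(1)$, at least one summand is $\ge(\log C_0)/2-o(1)$, producing an infinite sequence of points $s_k\in\{r_k-|c|,r_k\}$ with $T(s_k+|c|,f)\ge\sqrt{C_0}\,T(s_k,f)$. To upgrade this discrete sequence to a set $E_0$ of infinite \emph{linear} measure, I plan to use continuity of the function $h(r):=\log T(r+|c|,f)-\log T(r,f)$: the jump of $\log T$ of size $\ge\log C_0$ over $[r_k-|c|,r_k+|c|]$ forces, via an intermediate-value and averaging argument, a sub-interval $I_k\subset[r_k-|c|,r_k+|c|]$ of length bounded below (depending only on $C_0$ and $|c|$) on which $h(r)\ge(\log C_0)/4$. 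After passing to a subsequence of $r_k$ with $r_{k+1}-r_k>4|c|$, the disjoint $I_k$ together form $E_0$ with $C:=C_0^{1/4}>1$.

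For the entire case, Jensen's formula applied to $\log^+|f(z+c)/f(z)|\ge\log|f(z+c)|-\log|f(z)|$ yields
\[
m\!\left(r,\frac{f(z+c)}{f(z)}\right)\ge N(r,1/f(z+c))-N(r,1/f)+O(1).
\]
At $r=r_k$ along a suitable subsequence selected from those constructed above, substituting the $N/T$ bounds and using \eqref{ttt rr0} to lower-bound $T(r_k,f(z+c))$ by $(1-o(1))T(r_k-|c|,f)$ gives, after a case split on whether $r_k-|c|\in E_0$ (in the remaining case the alternative inequality $m(r,f(z+c)/f)\ge T(r,f)-T(r,f(z+c))-N(r,1/f)+O(1)$ together with the upper bound $T(r_k,f(z+c))\le(1+o(1))(D_1+\varepsilon)/(D_2-\varepsilon)\cdot T(r_k+|c|,f)$ obtained from combining \eqref{ttt rr00} with the $N/T$ inequalities), a positive constant lower bound on $m(r_k,f(z+c)/f)/T(r_k,f)$ along an infinite subsequence, proving the second assertion.

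The hardest part is the upgrade from the infinite sequence $\{s_k\}$ to a set of infinite linear measure: a naive continuity argument only produces sub-intervals whose lengths depend on the possibly degenerate local modulus of continuity of $\log T$, so the crux is a quantitative argument extracting, from a $2|c|$-interval log-jump of size $\log C_0$, a sub-interval of length bounded below by a positive constant on which the pointwise ratio $T(r+|c|,f)/T(r,f)$ stays above a fixed value exceeding $1$.
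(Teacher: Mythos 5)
Your derivation of the growth inequality $\limsup_{r\to\infty} T(r+|c|,f)/T(r-|c|,f)\geq C_0>1$ (or $=\infty$ when $D_1=0$) from the deficiency gap via \eqref{ttt rr0} and \eqref{ttt rr00} matches the paper's. After that, the two arguments diverge in both parts.

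For the set $E_0$: you attempt a direct construction as a union of intervals around the $r_k$, and you correctly flag the crucial missing step, namely extracting a sub-interval of length bounded below on which $T(r+|c|,f)/T(r,f)$ stays above a fixed constant. That step is in fact fillable: with $g=\log T$ and $\phi(r)=g(r+|c|)-g(r)$, writing $\phi(s_k)=[g(s_k+|c|)-g(s_k+|c|/2)]+[g(s_k+|c|/2)-g(s_k)]$ and using monotonicity shows $\phi\geq\phi(s_k)/2$ on one of $[s_k-|c|/2,s_k]$ or $[s_k,s_k+|c|/2]$. The paper, however, circumvents this entirely: it takes $E_0=\{r:T(r+|c|,f)\geq CT(r,f)\}$ with $1<C^3<(1-d_2)/(1-d_1)$ and argues by contradiction, applying \cite[Lemma~3.1]{halburdk:10JAMS} with $\mu(r)=r$ to the hypothesis that $E_0$ has finite linear measure, which after three iterations forces $\limsup T(r+|c|,f)/T(r-|c|,f)\leq C^3$, a contradiction. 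No quantitative spreading argument is needed.

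For the entire case there is a genuine gap. Jensen gives $m(r,f(z+c)/f)\geq N(r,1/f(z+c))-N(r,1/f)+O(1)$, and on $\{r_k\}$ this yields $m(r_k,\cdot)\geq (D_2-\varepsilon)T(r_k,f(z+c))-(D_1+\varepsilon)T(r_k,f)+O(1)$. To close, you need $T(r_k,f(z+c))$ (equivalently $T(r_k-|c|,f)$) to be at least a fixed positive fraction of $T(r_k,f)$, but monotonicity gives only the unhelpful upper bound $T(r_k-|c|,f)\leq T(r_k,f)$. Your fallback inequality $m\geq T(r,f)-T(r,f(z+c))-N(r,1/f)+O(1)$ requires $T(r,f(z+c))<(1-D_1-\varepsilon)T(r,f)$, which cannot hold when $D_1$ is near $1$ (try $D_1=0.9$, $D_2=0.95$): then $1-D_1$ is tiny while $T(r,f(z+c))$ is comparable to $T(r,f)$. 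One checks that no convex combination of your two inequalities gives a positive constant in that parameter range, because $D_2(1-D_1)-D_1<0$ there. The paper sidesteps this by staying with proximity functions, for which the deficiency bounds are $\liminf$ statements valid for all large $r$: from $T(r,f(z+c))=m(r,f(z+c))\leq T(r,f)+m(r,f(z+c)/f)$, from $(d_1-\varepsilon)T(r,f)\leq m(r,1/f)\leq m(r,1/f(z+c))+m(r,f(z+c)/f)$, and from a sequence where $m(r,1/f(z+c))\leq(d_2+\varepsilon)T(r,f(z+c))$, a single linear elimination gives $m(r,f(z+c)/f)\geq\frac{d_1-d_2-2\varepsilon}{1+d_2+\varepsilon}T(r,f)$ without any case split. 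You should adopt that route for the second assertion.
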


A simple counterexample for Proposition~\ref{necessity e} is the function $f(z)=e^z$ with characteristic $T(r,f)=r/\pi$. This function satisfies $d(0,f(z))=d(0,f(z+c))=1$ for any nonzero constant $c$ while $T(r+|c|,f)=T(r,f)+O(1)$ and $m(r,f(z+c)/f(z))=O(1)$ for all $r\geq 0$.
On the other hand, meromorphic functions having the property $d(0,f(z))\not=d(0,f(z+c))$ for some constant $c$ do exist, see Miles \cite{miles:83}. For the finite order case, we recall the following two examples. The first one is due to Gol'dberg \cite{BelinskiuiGol:54} who constructed a meromorphic function with order~1 such that
     \begin{equation*}
     \begin{split}
    &d(0,f(z))=1, \quad \text{and}\\
    &d(0,f(z+c))=0, \quad \text{for some $c\not=0$}.
    \end{split}
    \end{equation*}
The second one is due to Miles \cite{miles:83} who proved: There exists an entire function $f$ of order $3/2<\sigma(f)<\infty$ such that
     \begin{equation*}\label{ddependence}
     \begin{split}
     &d(0,f(z))=0,  \quad \text{and}\\
     &d(0,f(z+c))\geq \rho>0, \quad \text{for all $c\not=0$},
     \end{split}
    \end{equation*}
for some $\rho<1$ independent of $c$.


\begin{proof}[Proof of Proposition~\ref{necessity e}]
For simplicity, denote $d_1=d(0,f(z))$ and $d_2=d(0,f(z+c))$. Then $0\leq d_2<d_1\leq 1$. Note that the relation \eqref{ttt rr00} also holds for the counting function $N(r,1/f)$. By this fact and \eqref{ttt rr0}, we get
    \begin{equation*}
    \limsup_{r\rightarrow\infty}\frac{N(r,1/f(z+c))}{T(r,f(z+c))}\leq \limsup_{r\rightarrow\infty}\frac{N(r+|c|,1/f(z))}{T(r+|c|,f(z))}\cdot\frac{T(r+|c|,f(z))}{T(r-|c|,f(z))}.
    \end{equation*}
It follows that
    \begin{equation}\label{ttt r01a}
    \limsup_{r\rightarrow\infty}\frac{T(r+|c|,f)}{T(r-|c|,f)}=\infty
    \end{equation}
in the case $d_1=1$ and
    \begin{equation}\label{ttt r01b}
    \limsup_{r\rightarrow\infty}\frac{T(r+|c|,f)}{T(r-|c|,f)}\geq \frac{1-d_2}{1-d_1}
    \end{equation}
in the case $d_1<1$. Let $C$ be a real constant such that $1<C<\infty$ when $d_1=1$ and $1<C^3<(1-d_2)/(1-d_1)$ when $d_1<1$. Define the set
\begin{equation}\label{ttt r02}
E_0=\{r:T(r+|c|,f)\geq CT(r,f)\}.
\end{equation}
We claim that this set has infinite linear measure. Otherwise, the inequality
    \begin{equation}\label{ttt r1}
    T(r+|c|,f) \leq CT(r,f)
    \end{equation}
holds for all $r$ outside an exceptional set with finite linear measure. Recall the following lemma from \cite[Lemma~3.1]{halburdk:10JAMS}: Let $\mu$ be a positive, strictly increasing differentiable function of $r$ defined on $(r_0,\infty)$ for some $r_0$ and let $g_1(r)$ and $g_2(r)$ be two nondecreasing functions for all $r_0<r<\infty$ such that $g_1(r)\leq g_2(r)$ for all $r\in (r_0,\infty)\setminus E_1$, where the exceptional set $E_1$ satisfies
    \begin{equation*}
    \int_{t\in E_1\cap[r_0,\infty)}d\mu(t)<\infty.
    \end{equation*}
Then, for a given $\epsilon>0$, there is an $\hat{r}\geq r_0$ such that $g_1(r)\leq g_2(s(r))$ for all $r\geq \hat{r}$, where $s(r)=\mu^{-1}(\mu(r)+\epsilon)$. By applying this lemma with $\mu(r)=r$ to \eqref{ttt r1} we obtain, for a given $\epsilon$ such that $0<3\epsilon \leq |c|$, there is a large enough $r_0$ such that for all $r\in [r_0,\infty)$,
    \begin{equation*}
    T(r+|c|,f)\leq CT(r+\epsilon,f),
    \end{equation*}
which implies that the superior limit of $T(r+|c|,f)/T(r+\epsilon,f)$, as $r$ approaches $\infty$, is at most $C$. But then
     \begin{equation*}
    \limsup_{r\rightarrow\infty}\frac{T(r+|c|,f)}{T(r-|c|,f)}\leq \limsup_{r\rightarrow\infty}\frac{T(r+|c|,f)}{T(r+\epsilon,f)}\cdot\frac{T(r+\epsilon,f)}{T(r-|c|+2\epsilon,f)}\cdot\frac{T(r-|c|+2\epsilon,f)}{T(r-|c|,f)}\leq C^3,
    \end{equation*}
a contradiction to \eqref{ttt r01a} or \eqref{ttt r01b}. Hence the set defined in \eqref{ttt r02} must be of infinite linear measure.

If $f$ is an entire function such that $d_1>d_2$, then we can deduce that
        \begin{equation}\label{NN 1}
        \begin{split}
    T(r,f(z+c))&=m(r,f(z+c))\leq m(r,f(z))+m\left(r,\frac{f(z+c)}{f(z)}\right)\\
    &=T(r,f(z))+m\left(r,\frac{f(z+c)}{f(z)}\right).
    \end{split}
    \end{equation}
On the other hand, by the definition of deficiency, for a given $\varepsilon$ satisfying $0<2\varepsilon<d_1-d_2$, we also have
    \begin{equation}\label{NN 2}
    (d_1-\varepsilon)T(r,f(z))\leq m\left(r,\frac{1}{f(z)}\right) \leq  m\left(r,\frac{1}{f(z+c)}\right)+m\left(r,\frac{f(z+c)}{f(z)}\right)
    \end{equation}
holds for all sufficiently large $r$.
From the definition of $d_2$, it follows that there is a sequence of $r$ such that $m(r,1/f(z+c))\leq (d_2+\varepsilon) T(r,f(z+c))$ when $r$ is sufficiently large. Together with this inequality, we get from \eqref{NN 1} and \eqref{NN 2} that
        \begin{equation}\label{NN 3}
m\left(r,\frac{f(z+c)}{f(z)}\right)\geq \frac{d_1-d_2-2\varepsilon }{1+d_2+\varepsilon}\cdot T(r,f(z))
    \end{equation}
holds for all sufficiently large $r$ in this sequence. Thus our assertion follows.

\end{proof}


For a meromorphic function $f$ whose deficiency is dependent on the choice of origin, if $T(r,f)$ satisfies the condition \eqref{assu}, then by Lemma~\ref{technical} the estimate \eqref{concl} holds for all $r$ outside the set defined in \eqref{Fdef} with finite logarithmic measure. Since the constant $C$ in Proposition~\ref{necessity e} satisfies $C\geq 1+\eta/h(r)$ when $r$ is sufficiently large, it follows that the set in \eqref{Fdef} is also of infinite linear measure. However, from the proof of Lemma~\ref{danlogue}, it is seen that the exceptional set in the estimate \eqref{difana1} (also in~\eqref{hkt_est}) is independent of the one in \eqref{Fdef} whenever the characteristic function $T(r,f)$ satisfies
    \begin{equation*}
    \limsup_{r\rightarrow\infty}\frac{T(r+|c|,f)}{T(r,f)}<\infty,
    \end{equation*}
and, from the proof of Proposition~\ref{necessity e}, we see that this case cannot be excluded automatically when the deficiencies also satisfy $\max\{d(0,f(z)),d(0,f(z+c))\}<1$. Miles' function above is such an example.

After the above remark, we now apply Proposition~\ref{necessity e} to study the necessity of the exceptional set in \eqref{hkt_est}. Consider a meromorphic function $f$ of hyper-order $\varsigma<1$ whose deficiency is dependent on the choice of origin. Firstly, from Proposition~\ref{necessity e} it follows that for some finite constant $C>1$ the set $E_2=\{r:T(r+2|c|,f)\geq CT(r+|c|,f)\}$ is of infinite linear measure. From the definition of $E$ in \eqref{def  E00}, it is seen that there is a sufficiently large $r_0$ such that $E_2\cap[r_0,\infty) \subseteq E\cap[r_0,\infty)$. Thus the exceptional set associated with the error term in the estimate \eqref{hkt_est} is also of infinite linear measure. Secondly, if $f$ is entire, from Proposition~\ref{necessity e} we know that there is an infinite sequence of $r$ such that $m(r,f(z+c)/f(z))\not=o(T(r,f))$ as $r\rightarrow\infty$. By \eqref{hkt_est} it follows that
the infinite sequence of $r$ satisfying \eqref{NN 3} must be in the exceptional set associated with the error term in \eqref{hkt_est}.
In conclusion, for entire functions of hyper-order less than~1 whose deficiency is dependent on the choice of origin, the estimate \eqref{hkt_est} is not applicable in the exceptional set with infinite linear measure. This answers the question of the necessity of the exceptional set in the lemma on difference quotients, posed in the introduction.

As pointed out by Chiang and Luo \cite[p.~455]{chiangl:15}, the estimate \eqref{edmund_est} together with \cite[Theorem~2.1]{chiangf:08} implies the finite order case of Valiron's result: If a meromorphic function $f$ has finite order $\sigma$ and lower order $\lambda$ such that $\sigma-\lambda<1$, then $d(0,f(z))=d(0,f(z+c))$ since in this case the error term $O(r^{\sigma-1+\varepsilon})$ in \eqref{edmund_est} is small compared with $T(r,f)$ as $r\rightarrow\infty$ without any exceptional set. For example, the function $f(z)=e^z$ satisfies $\sigma(f)=\lambda(f)=1$ and $d(0,f(z))=d(0,f(z+c))=1$. The sharpness of Valiron's result is guaranteed by Gol'dberg's example above. Thus, for a meromorphic function $f$ of finite order $\sigma$ whose deficiency is dependent on the choice of origin, we have $\sigma-\lambda \geq 1$, and from the proof of Lemma~\ref{technical} and~\ref{danlogue} we know that, for a constant $C>1$ and a sufficiently small $\varepsilon>0$, the following inequality
    \begin{equation}\label{exceptional set0}
    T(\alpha(r+|c|),f) = T\left(r+|c|+\frac{r+|c|}{T(r+|c|,f)^{\varepsilon}},f\right) \leq  CT(r+|c|,f)\leq C^2T(r,f)
    \end{equation}
holds for all $r$ outside of an exceptional set of finite logarithmic measure.
If we apply \cite[Lemma~3.1]{halburdk:10JAMS} above with $\mu(r)=\log r$ to \eqref{exceptional set0} to remove this set, then we get from \eqref{hkt_est} that for some $\epsilon>0$,
    \begin{equation}\label{m relatioin}
    \begin{split}
    m\left( r,\frac{f(z+c)}{f(z)}\right)
    =o\left(\frac{T(e^{\epsilon}r,f)}{r^{1-\varepsilon/2}}\right)=o\left(\frac{r^{\sigma+\varepsilon/2}}{r^{1-\varepsilon/2}}\right)=o(r^{\sigma-1+\varepsilon}),
    \end{split}
    \end{equation}
where there exists an infinite sequence $r_n$ such that the error term in \eqref{m relatioin} satisfies $o(r_n^{\sigma-1+\varepsilon})\not=o(T(r_n,f))$ as $r_n\rightarrow\infty$.

\section{Discrete Borel type growth lemma extensions}\label{discrete_sec}

Osgood \cite{osgood:85} and, independently, Vojta \cite{vojta:87} observed that Nevanlinna's theory of value distribution and Diophantine approximation theory appear to be analogous on a deep level. Based on this observation Vojta composed a ``dictionary'' between the two theories. In this dictionary Nevanlinna theory appears to be ahead of Diophantine approximation theory in the sense that deep open conjectures in Diophantine approximation correspond to known classical results in Nevanlinna theory.

By replacing the continuous variable $r$ in Lemma~\ref{technical} with a sequence of positive numbers, we have the following discrete analogue of Lemma~\ref{technical}.
Our result is an extension of \cite[Lemma~8]{al-ghassanih:15} due to Al-Ghassani and Halburd, who applied their result to consider non-linear discrete equations with solutions $y_n\in\mathbb{Q}$ having slow height growth in terms of $n$. For clarity, in what follows we use $h(n)=h_n$ for a discrete sequence $\{h_n\}$.

\begin{lemma}\label{technicala}
Let $\{T_n\}_{n\geq n_0}$ $(n_0>0)$ be a non-decreasing sequence of positive numbers and let $s$ be a fixed positive integer. If
    \begin{equation}\label{assua}
    \limsup_{n\to\infty}\frac{h(n)h(nh(n)) \log T_n}{n}=\zeta,
    \end{equation}
where $\zeta\in[0,\infty)$ and $h(n)$ is an increasing sequence of positive numbers such that 
    \begin{equation*}
    \sum_{n=n_0}^\infty \frac{1}{nh(n)}<+\infty,
    \end{equation*}
then
   \begin{equation}\label{concla}
    T_{n+s} = T_n+(\zeta+o(1))\left(\frac{T_n}{h(n)}\right),
    \end{equation}
where $n$ runs to infinity outside of a set $E$ of finite discrete logarithmic
measure, i.e. $\sum_{n\in E}1/n<\infty$.
    \end{lemma}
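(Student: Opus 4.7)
The plan is to mirror the proof of Lemma~\ref{technical} step-for-step, with sums replacing integrals. Fix a real $\eta > \zeta$, define the discrete ``bad set''
\begin{equation*}
F_\eta = \left\{n \geq n_0 \,:\, \frac{T_{n+s} - T_n}{T_n}\, h(n) \geq \eta\right\},
\end{equation*}
and argue by contradiction: assume that $\sum_{n \in F_\eta} 1/n = \infty$. Set $n_0' := \min F_\eta$ and recursively $n_{k+1} := \min\bigl(F_\eta \cap [n_k + s, \infty)\bigr)$. Then $n_{k+1} - n_k \geq s$, the set $F_\eta$ is covered by the finite blocks $\{n_k, n_k+1, \ldots, n_k+s-1\}$, and the defining property of $F_\eta$ together with the monotonicity of $\{T_n\}$ yields
\begin{equation*}
T_{n_{k+1}} \,\geq\, T_{n_k + s} \,\geq\, \left(1 + \frac{\eta}{h(n_k)}\right) T_{n_k}.
\end{equation*}

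Next comes the key dichotomy. If there is a $K$ with $n_k \geq k\, h(k)$ for all $k \geq K$, then
\begin{equation*}
\sum_{n \in F_\eta} \frac{1}{n} \,\leq\, \sum_{k \geq 0} \sum_{j=0}^{s-1}\frac{1}{n_k+j} \,\leq\, O(1) + s \sum_{k \geq K}\frac{1}{k\,h(k)} \,<\, \infty,
\end{equation*}
contradicting the assumption. Otherwise, one extracts a subsequence $\{n_{k_j}\}$ with $n_{k_j} < k_j h(k_j)$. Iterating the multiplicative inequality along this subsequence and using that $h$ is increasing, one obtains
\begin{equation*}
\log T_{n_{k_j}} \,\geq\, \log T_{n_0'} + k_j \log\!\left(1 + \frac{\eta}{h(n_{k_j})}\right).
\end{equation*}

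The endgame is identical to that of Lemma~\ref{technical}: using $n_{k_j} \geq k_j s \geq k_j$ (so $h(n_{k_j}) \geq h(k_j)$) and $n_{k_j} < k_j h(k_j)$ (so $h(n_{k_j}) \leq h(k_j h(k_j))$), one bounds $h(n_{k_j})\, h(n_{k_j} h(n_{k_j}))\,\log T_{n_{k_j}}/n_{k_j}$ from below by a quantity converging to $\eta$ as $j \to \infty$, since $x \log(1 + \eta/x) \to \eta$ with $x := h(k_j h(k_j)) \to \infty$. This contradicts \eqref{assua} and forces $F_\eta$ to have finite discrete logarithmic measure, from which \eqref{concla} follows. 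The main obstacle is not really conceptual: it is the bookkeeping needed to replace the continuous covering $F_\eta \subset \bigcup_k [r_k, r_k+s]$ with the discrete blocks $\{n_k, \ldots, n_k+s-1\}$ and to verify that the resulting sum $\sum s/(k h(k))$ converges under the discrete hypothesis $\sum 1/(n h(n)) < \infty$. A minor technical nuisance is that $\log T_{n_0'}$ could a priori be negative; but since $\{T_n\}$ is non-decreasing and positive, one may assume $T_{n_0'} > 1$ by enlarging $n_0$, and otherwise $\{T_n\}$ is bounded, forcing $\zeta = 0$ and reducing the claim to showing $F_\eta$ has finite discrete logarithmic measure for each $\eta > 0$, which the same argument delivers.
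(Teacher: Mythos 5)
Your proposal is correct and follows essentially the same approach as the paper's proof: define the exceptional set $F_\eta$, construct the widely-spaced subsequence, split into the two cases $n_k \geq k\,h(k)$ eventually versus a subsequence with $n_{k_j} < k_j h(k_j)$, and derive a contradiction with \eqref{assua} by iterating the multiplicative recursion. The paper actually omits the iteration details in the second case, referring back to the continuous Lemma~\ref{technical}; your write-up fills those in explicitly (including the limit $x\log(1+\eta/x)\to\eta$) and also handles the minor technicality that $\log T_{n_0'}$ could be negative, both consistent with the paper's intent.
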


\begin{proof}
For a fixed constant $\eta\in\mathbb{R}^+$ such that $\eta>\zeta$, assume that the set $F_{\eta}\subset\mathbb{N}$ defined by
    \begin{equation}\label{Fdefa}
    F_{\eta}=\left\{n\geq n_0:
    \frac{T_{n+s}-T_n}{T_n}\cdot h(n)\geq \eta
    \right\}
    \end{equation}
is of infinite discrete logarithmic measure, i.e. $\sum_{n\in F_\eta}1/n=\infty$. 
Let $r_0=\min(F_{\eta})$ and, for all $n\in \mathbb{N}$, set
$r_n=\min(F_\eta\cap F_{n-1})$, where $F_{n-1}=\{i\geq0:r_{n-1}+s+i\}$ is the set defined for all $i\in \mathbb{N}$. Then
the sequence $\{r_n\}_{n\in\mathbb{N}}$ satisfies $r_{n+1}-r_n\geq s$
for all $n\in\mathbb{Z}^+$, $F_\eta\subset \bigcup_{n=0}^\infty\{r_n,r_n+1,\ldots,r_n+s\}$ and
    \begin{equation}\label{assuinpr2a}
    \left(1+\frac{\eta}{h(r_n)}\right)T_{r_n}\leq T_{r_n+s}\leq T_{r_{n+1}}
    \end{equation}
for all $n\in\mathbb{Z}^+$. Suppose that there exists an integer $m\in\mathbb{Z}^+$
such that $r_n\geq n h(n)$ for all $n\geq m$. But
then,
    \begin{eqnarray*}
    \sum_{j\in F_\eta}\frac{1}{j} &\leq& \sum_{n=m}^\infty
    \sum_{k=[n h(n)]}^{[n h(n)]+s}\frac{1}{k}+O(1)\leq \sum_{n=m}^\infty\frac{s}{[n h(n)]}+O(1) \\ &\leq& \sum_{n=m}^\infty\frac{s}{nh(n)-1} +O(1)  <\infty,
    \end{eqnarray*}
where $[nh(n)]$ denotes the largest integer not exceeding $n h(n)$, which is a contradiction with the assumption $\sum_{j\in F_\eta}1/j=\infty$. Therefore, the sequence $\{r_n\}_{n\in\mathbb{Z}^+}$ has a subsequence $\{r_{n_j}\}_{j\in\mathbb{Z}^+}$ such that $r_{n_j}\leq n_jh(n_j)$ for all $j\in\mathbb{N}$.
Then, as in the proof of Lemma~\ref{technical}, by iterating~\eqref{assuinpr2a} along the sequence
$\{r_{n_j}\}_{j\in\mathbb{N}}$, it can be shown that $r_{n_j} \geq n_j$
for all $j\in\mathbb{Z}^+$ and that the superior limit of $h(n)h(nh(n)) \log T_n/n$ along the sequence $r_{n_j}$ is $\geq \eta$, which yields a contradiction to \eqref{assua}.
We omit those details. This implies that the discrete logarithmic measure of
$F_\eta$ defined by \eqref{Fdefa} must be finite, and so
    \begin{equation*}
    T_{n+s} = T_n+ (\zeta+o(1))\left(\frac{T_n}{h(n)}\right)
    \end{equation*}
for all $n$ outside of a set of finite discrete logarithmic measure.
Thus the assertion \eqref{concla} follows.
\end{proof}

Analogous to the continuous case, if we take $h(n)$ in Lemma~\ref{technicala} to be $h(n)=(\log n)^{1+\varepsilon}$, where $\varepsilon>0$, $n\geq n_0>e$, then
    \begin{equation*}
    \begin{split}
    \sum_{n=n_0}^\infty \frac{1}{n(\log n)^{1+\varepsilon}} &\leq \frac{1}{n_0h(n_0)}+\sum_{n=n_0+1}^\infty\int_{n-1}^n\frac{1}{t(\log t)^{1+\varepsilon}}dt\\  &\leq \frac{1}{n_0h(n_0)}+\int_{n_0}^{\infty}\frac{1}{t(\log t)^{1+\varepsilon}}dt <+\infty.
    \end{split}
    \end{equation*}
Then, if $\{T_n\}_{n\geq n_0}$ is a sequence of $n$ such that $\log T_n \leq n/(\log n)^{2+\nu}$, where $\nu>0$, we have
    \begin{equation*}
    \limsup_{n\to\infty}\frac{h(n)h(nh(n)) \log T_n}{n}=0,
    \end{equation*}
and so by Lemma~\ref{technicala} it follows that
    \begin{equation*}\label{ttt  sec3}
    T_{n+s} = T_n+ o\left(\frac{T_n}{(\log n)^{1+\varepsilon}}\right).
    \end{equation*}
Also, from the characteristic function $T(r,f)$ of the function $f(z)=\exp(e^z)$ we know that the constant $\zeta$ in Lemma~\ref{technicala} cannot be extended to $\infty$ since we may choose an infinite sequence of $r_n$ such that $r_n=n$.

\section{Malmquist's theorem for discrete equations}\label{dMalm_sec}

The algebraic entropy \cite{bellonv:99,hietarintav:98} of a discrete equation is defined as
    \begin{equation*}
    \lim_{j\to\infty}\frac{\log d_j}{j},
    \end{equation*}
where $d_j$ is the degree of the $j^{th}$ iterate of a discrete equation as a rational function of its initial conditions. If the algebraic entropy of a discrete equation is zero, then this is considered to be a strong sign of integrability of the equation. Consider as an example discrete equation
    \begin{equation}\label{d_eq}
    y_{n+1} = R(n,y_n)=\frac{P(n,y_n)}{Q(n,y_n)},
    \end{equation}
where $P(n,y_n)$ and $Q(n,y_n)$ are coprime polynomials in $y_n$ having rational coefficients in $\mathbb{Q}[n]$. For the autonomous version of \eqref{d_eq}, we have $d_j=[\deg_{y_0}(R)]^j$ and so the algebraic entropy is equal to $\log \deg_{y_0}(R)$ in this case. This implies that the algebraic entropy of \eqref{d_eq} is zero if and only if \eqref{d_eq} is the discrete Riccati equation. For a review on applications of algebraic entropy to the second order discrete equations, see \cite{grammaticoshrv:09}.

Halburd~\cite{halburd:05} has shown, assuming that the heights of the coefficients are small compared to the height of the solution, that the heights of iterates of the discrete equation \eqref{d_eq} over number fields grow exponentially, unless $\deg_{y_0}(R)=1$. Using this idea of Diophantine integrability, Al-Ghassani and Halburd obtained an extension of this result to the second order case by singling out the discrete Painlev\'e~II equation~\cite{al-ghassanih:15}.

As the final result of this study, we will apply Lemma~\ref{technicala} to give an improvement of Halburd's result on the first order discrete equations. Before stating the result, we need one more definition. Let $k$ be a number field, and let $\{y_n\}_{n\in\mathbb{N}}\subset k$ be a solution of \eqref{d_eq}, where the coefficients are in $k[n]$. For $x\in k$ we denote by $H(x)$ the \textit{height} and by $h(x)=\log H(x)$ \textit{the logarithmic height} of $x$. We say that $\{y_n\}_{n\in\mathbb{N}}$ is \textit{admissible} if the logarithmic heights of all coefficients of \eqref{d_eq} are of the growth $o(h(y_n))$ as $n\to\infty$ outside of an exceptional set of finite discrete logarithmic measure $\sum_{n\in E} 1/n < \infty $. This definition is an exact Diophantine analogue of the notion of admissible meromorphic solution of a difference equation in the spirit of Vojta's dictionary \cite{vojta:87}.

\begin{theorem}
Let $k$ be a number field, and let $\{y_n\}_{n\in\N}\subset k$ be an admissible solution of \eqref{d_eq}, where the coefficients are in $k[n]$. If
    \begin{equation*}
    \limsup_{n\to\infty} \frac{\log \sum_{k=1}^nh(y_k)}{n/(\log n)^{2+\nu}}=0
    \end{equation*}
for any $\nu> 0$, then $\deg_{y_0}(R)=1$.
\end{theorem}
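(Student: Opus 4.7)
The plan is to argue by contradiction: suppose $d := \deg_{y_0}(R) \geq 2$ and set $T_n = \sum_{k=1}^n h(y_k)$. The strategy is to combine a standard height-theoretic lower bound for $h(y_n)$ in terms of $T_{n-1}$ with the sharp upper bound for $T_{n+1}-T_n$ supplied by Lemma~\ref{technicala}.

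By the functoriality of heights under a rational map of degree $d$, one has $h(R(m,y))\geq d\,h(y)-\kappa C_m$, where $\kappa$ depends only on the shape of $R$ and $C_m$ denotes the maximum logarithmic height of the coefficients of $R(m,\cdot)\in k(y)$. Since these coefficients are polynomials in $m$ with fixed coefficients in $k$, we have $C_m=O(\log m)$. Applying this at $y_m$ and summing from $m=1$ to $n-1$ gives
\[
T_n\geq d\,T_{n-1}-O(n\log n),\qquad \text{equivalently}\qquad h(y_n)\geq (d-1)T_{n-1}-O(n\log n).
\]
Admissibility states that $C_m=o(h(y_m))$ for $m$ outside a set $E_1$ with $\sum_{m\in E_1}1/m<\infty$, which together with $C_m=O(\log m)$ forces $h(y_m)/\log m\to\infty$ along $m\notin E_1$. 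Since finite discrete logarithmic measure implies $|E_1\cap[n/2,n]|=o(n)$, summing $h(y_m)$ over $[n/2,n]\setminus E_1$ yields $T_n\gg n\log n$, and the display above simplifies to $h(y_n)\geq(d-1)(1-o(1))T_{n-1}$.

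Next I apply Lemma~\ref{technicala} to the non-decreasing sequence $T_n$ with the increasing sequence $(\log n)^{1+\varepsilon}$ (for a small $\varepsilon>0$) playing the role of $h(n)$. The convergence $\sum 1/(n(\log n)^{1+\varepsilon})<\infty$ is immediate, and the growth hypothesis applied with $\nu=2\varepsilon$ gives $h(n)h(nh(n))\log T_n/n\sim(\log n)^{2+2\varepsilon}\log T_n/n\to 0$, so $\zeta=0$. Hence
\[
h(y_{n+1})=T_{n+1}-T_n=o\!\left(\frac{T_n}{(\log n)^{1+\varepsilon}}\right)
\]
for $n$ outside a set $E_2$ of finite discrete logarithmic measure. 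Combining this upper bound at $n-1\notin E_2$ with the lower bound above and dividing by $T_{n-1}$ gives $(d-1)(1-o(1))\leq o(1/(\log n)^{1+\varepsilon})$. Letting $n\to\infty$ through $n-1\notin E_2$ forces $d-1\leq 0$, contradicting $d\geq 2$; hence $d=1$.

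The main technical point is the transition from the pointwise admissibility bound $C_m=o(h(y_m))$ on the complement of $E_1$ to the integrated estimate $n\log n=o(T_n)$; this requires a small density-counting argument based on the fact that finite discrete logarithmic measure of $E_1$ forces vanishing upper density on dyadic intervals. The rest of the proof is either a standard Diophantine height inequality or a direct application of Lemma~\ref{technicala}.
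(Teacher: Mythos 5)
Your proposal is correct and follows essentially the same route as the paper's own proof: both apply Lemma~\ref{technicala} (with $h(n)=(\log n)^{1+\varepsilon}$, so $\zeta=0$) to $T_n=\sum_{k=1}^n h(y_k)$ to obtain $T_{n+1}=(1+o(1))T_n$ outside a set of finite discrete logarithmic measure, and both play this off against a lower estimate of the form $T_{n+1}\geq(d-o(1))T_n$ coming from the height functoriality of $R$ applied to \eqref{d_eq}, forcing $d=\deg_{y_0}(R)=1$.

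The difference is in how the lower estimate is obtained. The paper writes down the two-sided asymptotic $h(y_{n+1})=d\,h(y_n)+o(h(y_n))$ outside a finite-log-measure set (using admissibility directly), then sums it; this is terse and quietly glosses over the contribution to the summed error term from indices lying in the exceptional set. You instead use the \emph{unconditional} one-sided inequality $h(R(m,y_m))\geq d\,h(y_m)-\kappa C_m$, sum it without any exceptional set, and then separately show that the accumulated error $\sum_{m\leq n}C_m=O(n\log n)$ is $o(T_{n-1})$ via admissibility plus a density count on $E_1$. This is a cleaner way to deal with the error bookkeeping and is a genuine small improvement on the exposition.

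One step in your density argument is stated too strongly. From $C_m=o(h(y_m))$ (off $E_1$) and $C_m=O(\log m)$ you infer $h(y_m)/\log m\to\infty$ off $E_1$; this does not follow unless $C_m\gtrsim\log m$. If the coefficients of $R$ in \eqref{d_eq} happen to be constant in $n$ (so $C_m=O(1)$), admissibility only forces $h(y_m)\to\infty$ off $E_1$, not $h(y_m)\gg\log m$. The fix is easy: in that case the accumulated error from the height inequality is only $O(n)$ rather than $O(n\log n)$, and the same density count gives $T_n/n\to\infty$, which suffices. A cleaner way to phrase the whole step is to bound $\sum_{m\leq n-1}C_m$ directly, splitting over $E_1$ and its complement: the complement contributes $o(T_{n-1})$ by admissibility, and the $E_1$ part contributes $O(\log n)\cdot|E_1\cap[1,n]|=o(n\log n)$, which is then compared against the lower bound on $T_{n-1}$ obtained from the non-exceptional $m$. (Both your write-up and the paper leave aside the degenerate situation $C_m\equiv 0$, where admissibility is vacuous; this is an edge case of the theorem's statement rather than a defect peculiar to your proof.)
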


\begin{proof}
By taking the logarithmic height of both sides of \eqref{d_eq}, it follows that
    \begin{equation*}
    h(y_{n+1}) = \deg_{y_0}(R) h(y_n) + o(h(y_n))
    \end{equation*}
as $n\to\infty$ outside of an exceptional set $E$ of finite discrete logarithmic measure. Therefore,
    \begin{equation*}
    \sum_{k=1}^{n+1}h(y_{k}) = \deg_{y_0}(R) \sum_{k=1}^{n} h(y_k) + o(h(y_n)),
    \end{equation*}
and so, by applying Lemma~\ref{technicala} with $T_n= \sum_{k=1}^{n} h(y_k)$, we have the assertion.
\end{proof}

\def\cprime{$'$}
\providecommand{\bysame}{\leavevmode\hbox to3em{\hrulefill}\thinspace}
\providecommand{\MR}{\relax\ifhmode\unskip\space\fi MR }
\providecommand{\MRhref}[2]{%
  \href{http://www.ams.org/mathscinet-getitem?mr=#1}{#2}
}
\providecommand{\href}[2]{#2}

%

\end{document}